\newtheorem{theorem}{Theorem}[section]
\newtheorem{fact}[theorem]{Fact}
\newtheorem{corollary}[theorem]{Corollary}
\newtheorem{proposition}[theorem]{Proposition}
\theoremstyle{definition}
\newtheorem{remark}[theorem]{Remark}
\newcommand{\abar}{\bar{a}}
\newcommand{\bbar}{\bar{b}}
\newcommand{\cbar}{\bar{c}}
\newcommand{\ybar}{\bar{y}}
\newcommand{\zbar}{\bar{z}}
\def\seq{\subseteq}
\def\inv{^{\text{-}1}}
\def\F{\mathbb{F}}
\def\smd{\raisebox{.4pt}{\textrm{\scriptsize{~\!$\triangle$\!~}}}}
\def\FER{\operatorname{FER}}
\def\Def{\operatorname{Def}}
\def\N{\mathbb{N}}
\def\Z{\mathbb{Z}}
\def\R{\mathbb{R}}
\def\cL{\mathcal{L}}
\def\cU{\mathcal{U}}
\def\cP{\mathcal{P}}
\def\st{\operatorname{st}}
\def\deg{\operatorname{deg}}
\title{A group version of  stable regularity}
\author[G. Conant]{G. Conant}
\address{Department of Mathematics\\
University of Notre Dame\\
Notre Dame, IN, 46656\\
 USA}
\email{gconant@nd.edu}
\author[A. Pillay]{A. Pillay}
\thanks{The second author was supported by NSF grants DMS-1360702 and DMS-1665035.}
\address{Department of Mathematics\\
University of Notre Dame\\
Notre Dame, IN, 46656\\
 USA}
\email{apillay@nd.edu}
\author[C. Terry]{C. Terry}
\address{Department of Mathematics\\
University of Maryland\\
College Park, MD 20742\\
 USA}
\email{cterry@umd.edu}
\date{September 30, 2018}
\begin{document}

\begin{abstract}
We prove that, given $\epsilon>0$ and  $k\geq 1$, there is an integer $n$ such that the following holds. Suppose $G$ is a  finite group and $A\subseteq G$ is $k$-stable. Then there is a normal subgroup $H\leq G$ of index at most $n$, and a set $Y\subseteq G$, which is a union of cosets of $H$, such that $|A\smd Y|\leq\epsilon|H|$. It follows that, for any  coset $C$ of $H$, either $|C\cap A|\leq \epsilon|H|$ or $|C\setminus A|\leq \epsilon|H|$. This qualitatively generalizes recent work of Terry and Wolf on vector spaces over $\mathbb{F}_p$.
\end{abstract}

\maketitle

\section{Introduction}

Given a group $G$, a subset $A\seq G$, and an integer $k\geq 1$, we say $A$ is \emph{$k$-stable} if there do not exist $a_1,\ldots, a_k,b_1,\ldots, b_k\in G$ such that $a_ib_j\in A$ if and only if $i\leq j$.  In  \cite{TW}, Terry and Wolf prove the following arithmetic regularity lemma for $k$-stable subsets $A\seq\F_p^n$, where $\mathbb{F}_p^n$ is the additive group of the $n$-dimensional vector space over a fixed field of prime order $p$. 

\begin{theorem}[Terry \& Wolf \cite{TW}]\label{thm:TW}
For any $k\geq 1$, $\epsilon>0$, and prime $p$, there is $N=N(k,\epsilon,p)$ such that the following holds for any $n\geq N$. Suppose $G:=\F_p^n$ and that $A\seq G$ is $k$-stable. Then there is a subgroup $H\leq G$ of index at most $p^{\epsilon^{\text{-}O_k(1)}}$ such that for any $g\in G$, either $|H\cap (A-g)|\leq\epsilon|H|$ or $|H\setminus (A-g)|\leq\epsilon|H|$. 
\end{theorem}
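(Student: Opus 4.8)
The plan is to do Fourier analysis on $G=\F_p^n$ and to build $H$ directly from the large spectrum of $A$, using $k$-stability only at the very end to force the coset densities of $A$ to be extreme. Write $f=1_A$ and, for a threshold $\delta>0$ to be fixed later, let $\Lambda_\delta=\{\gamma\neq 0:|\hat f(\gamma)|\geq\delta\}$ be the $\delta$-large spectrum and set $H=\Lambda_\delta^{\perp}=\bigcap_{\gamma\in\Lambda_\delta}\ker\gamma$. By Parseval, $\sum_\gamma|\hat f(\gamma)|^2=\mathbb{E}|f|^2\leq 1$, so $|\Lambda_\delta|\leq\delta^{-2}$; hence $\operatorname{codim}H\leq\delta^{-2}$ and $[G:H]\leq p^{\delta^{-2}}$. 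This is the step that makes the index bound polynomial in $1/\epsilon$ (equivalently, the codimension polynomial), and it is special to the degree-one Fourier analysis available in $\F_p^n$: no inverse theorem and no energy increment are needed, only Parseval. The same choice yields linear uniformity of the remainder for free. Writing $f^{\sharp}(x)=|A\cap(x+H)|/|H|$ for the coset-density function, one has $\widehat{f^{\sharp}}=\hat f$ on $H^\perp$ and $0$ off it, while every $\gamma\notin H^\perp$ lies outside $\Lambda_\delta$ and so satisfies $|\hat f(\gamma)|<\delta$. Therefore $\|f-f^{\sharp}\|_{U^2}^4=\sum_{\gamma\notin H^\perp}|\hat f(\gamma)|^4\leq\delta^2\sum_\gamma|\hat f(\gamma)|^2\leq\delta^2$, i.e. $\|f-f^{\sharp}\|_{U^2}\le\sqrt\delta$.

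Next I would record the dictionary with graphs. The relation $R(a,b)\iff a+b\in A$ defines a bipartite graph on $G\times G$ which is $k$-stable exactly because $A$ is: a height-$k$ half-graph for $R$ is precisely a configuration $a_ib_j\in A\iff i\leq j$. For cosets $C_1=u+H$ and $C_2=v+H$, the $R$-density between $C_1$ and $C_2$ equals $\mathbb{E}_{a\in C_1,\,b\in C_2}f(a+b)=f^{\sharp}(u+v)$, since $a+b$ ranges uniformly over the single coset $u+v+H$; and the linear uniformity above makes every such pair quasirandom once $\delta$ is small. Thus intermediate density of the pair $(C_1,C_2)$ is the same thing as $f^{\sharp}(u+v)\in[\epsilon,1-\epsilon]$. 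Since $|A\cap(g+H)|=|H\cap(A-g)|$, the conclusion we are after---for every $g$, either $|H\cap(A-g)|\le\epsilon|H|$ or $|H\setminus(A-g)|\le\epsilon|H|$---says exactly that $f^{\sharp}$ avoids intermediate values, i.e. $f^{\sharp}(g)\in[0,\epsilon]\cup[1-\epsilon,1]$ for all $g$.

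The heart of the argument, and the main obstacle, is therefore to rule out intermediate coset densities using $k$-stability, and to do so for \emph{every} coset (the statement quantifies over all $g$). Here I would argue by contradiction exactly as in stable graph regularity, but from a single bad coset. Suppose some coset $w+H$ had $f^{\sharp}(w)=d\in[\epsilon,1-\epsilon]$. Picking any $u,v$ with $u+v\equiv w$, the pair $(C_1,C_2)=(u+H,\,v+H)$ is a quasirandom bipartite pair of density $d$ bounded away from $0$ and $1$, and both $C_1,C_2$ have size $|H|\ge p^{\,n-\delta^{-2}}$. A standard quasirandom counting (generalized von Neumann) estimate, controlled by $\|f-f^{\sharp}\|_{U^2}\le\sqrt\delta$, then shows that the number of copies of the height-$k$ half-graph inside this one pair is $\big(d^{\binom{k+1}{2}}(1-d)^{\binom{k}{2}}+o(1)\big)|H|^{2k}$, which is positive once $\delta$ is small relative to $k,\epsilon$ and $|H|$ is large. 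This produces $a_1,\dots,a_k\in C_1$ and $b_1,\dots,b_k\in C_2$ with $a_i+b_j\in A\iff i\le j$, contradicting $k$-stability. The delicate point---and the reason stability buys a polynomial index where general arithmetic regularity pays a tower---is that it is the bounded ladder length $k$, rather than any inverse theorem, that forbids the embedding; quantitatively I would fix $\delta=\epsilon^{O_k(1)}$ small enough for the counting, so that $[G:H]\le p^{\delta^{-2}}=p^{\epsilon^{-O_k(1)}}$, and take $N(k,\epsilon,p)$ large enough that $|H|\ge p^{\,n-\delta^{-2}}$ exceeds the embedding threshold for all $n\ge N$. For such $n$ no coset can have intermediate density, which is exactly the desired dichotomy.
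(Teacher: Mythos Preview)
This paper does not contain a proof of Theorem~\ref{thm:TW}: the result is quoted from \cite{TW} as background, and the paper's own contributions are the qualitative Theorems~\ref{thm:main} and~\ref{thm:structure}, proved via local stable group theory (Theorem~\ref{main-lemma}) and an ultraproduct construction, with no Fourier analysis and no effective index bound. So there is nothing here to compare your argument against; your sketch targets the original finitary proof in \cite{TW}, which is methodologically disjoint from the present paper.

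On its own terms the outline has the right shape, but the passage ``the linear uniformity above makes every such pair quasirandom once $\delta$ is small'' hides a real gap. The global bound $\|f-f^{\sharp}\|_{U^2}\le\sqrt\delta$ does not by itself control the quasirandomness of the bipartite graph restricted to a single pair of cosets. That restricted graph is the Cayley sum graph on $H$ for $A'=(A-u-v)\cap H$, and for a nontrivial $\chi\in\widehat H$ one has
\[
\widehat{1_{A'}}(\chi)=\sum_{\gamma\,:\,\gamma|_H=\chi}\hat f(\gamma)\,\gamma(u+v),
\]
a sum over a full coset of $H^{\perp}$ with $[G:H]$ terms; each satisfies $|\hat f(\gamma)|<\delta$, but the triangle inequality only yields $|\widehat{1_{A'}}(\chi)|<[G:H]\cdot\delta\le p^{\delta^{-2}}\delta$, which is vacuous. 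Equivalently, a generalized von Neumann bound normalized over $G^{2k}$ cannot detect a count supported on a set of density $[G:H]^{-2k}$ inside $G^{2k}$. To repair this you need either a separate argument giving uniformity of $A$ on \emph{each} coset of $H$ (which typically forces some iteration and is where the real work in \cite{TW} lies), or to replace the counting-lemma step with a direct combinatorial extraction of a height-$k$ half-graph from an intermediate-density coset using stability alone.
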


Theorem \ref{thm:TW} is a strengthened version of Green's ``Regularity lemma in $({\mathbb{Z}}/2{\mathbb{Z}})^{n}$", Theorem 2.1 of \cite{Green}, for stable subsets of $\F_p^n$.  In this paper, we use model theoretic techniques to generalize Theorem \ref{thm:TW} to arbitrary finite (not necessarily abelian) groups $G$, but without the quantitative information on the index of $H$ in $G$. Specifically, we prove:

\begin{theorem}\label{thm:main}
For any $k\geq 1$ and $\epsilon>0$, there is $n=n(k,\epsilon)$ such that the following holds. Suppose $G$ is a finite group and $A\seq G$ is $k$-stable. Then there is a normal subgroup $H\leq G$, of index at most $n$, such that for each coset $C$ of $H$ in $G$ either $|C\cap A|\leq\epsilon|H|$ or $|C\setminus A|\leq \epsilon|H|$. 
\end{theorem}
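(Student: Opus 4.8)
The plan is to argue by contradiction: pass to an ultraproduct so as to replace the family of finite groups by a single pseudofinite group equipped with a bi-invariant Keisler measure, extract the desired subgroup from local stable group theory, and then transfer back. Suppose the conclusion fails for some fixed $k\geq 1$ and $\epsilon>0$. Then for every $n$ there are a finite group $G_n$ and a $k$-stable set $A_n\seq G_n$ such that no normal subgroup of $G_n$ of index at most $n$ has the stated coset dichotomy. Fix a nonprincipal ultrafilter $\cU$ on $\N$ and put $G:=\prod_{n\to\cU}G_n$ and $A:=\prod_{n\to\cU}A_n\seq G$. Then $G$ is $\aleph_1$-saturated, and, as ``$A$ is $k$-stable'' is a universal statement, \L o\'s's theorem gives that $A$ is $k$-stable in $G$; equivalently the partitioned formula $\varphi(x;y):=(y\inv x\in A)$ is stable, with ladder bound depending only on $k$. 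Moreover the normalized counting measures on the $G_n$ induce a Keisler measure $\mu$ on the definable subsets of $G$ via $\mu(\prod_{n\to\cU}D_n):=\st\bigl(\lim_{n\to\cU}|D_n|/|G_n|\bigr)$, and $\mu$ is left- and right-invariant because $|gD_n|=|D_n|=|D_ng|$ in each finite group.

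The core of the argument is to produce a type-definable subgroup $H\leq G$ of bounded index, together with a union $Y$ of cosets of $H$, with $\mu(A\smd Y)=0$. The natural candidate for $H$ is the $\varphi$-connected component of $G$ (the intersection of all $\varphi$-definable finite-index subgroups, equivalently the stabilizer of the $\varphi$-generic in the sense of stable group theory). Using the Baldwin--Saxl condition and the stable bound on indices of $\varphi$-definable finite-index subgroups, $H$ is an intersection of finitely many $\varphi$-definable subgroups, hence definable by a single formula with finitely many parameters, and $[G:H]$ is bounded by a function of $k$. The invariant measure is what ties $A$ to $H$: since $A$ is $\varphi$-definable and $H$ is $\varphi$-connected, the $\varphi$-type of a $\mu$-generic element of a coset $gH$ is determined, which yields a zero--one law — for every $g\in G$, either $\mu(gH\cap A)=0$ or $\mu(gH\setminus A)=0$. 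Taking $Y$ to be the union of the (boundedly many) cosets of $H$ of the second kind gives $\mu(A\smd Y)=0$. I expect this to be the main obstacle: it is the only genuinely infinitary step, and it is exactly here that the definability and boundedness phenomena peculiar to stable formulas must be combined with the measure. Replacing $H$ by the intersection of its (at most $[G:H]$ many) conjugates, we may further take $H$ normal at the price of a larger but still bounded index; since each old coset is a union of new ones, $\mu(A\smd Y)=0$ is preserved.

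Finally, I would transfer the conclusion back to the $G_n$. Being normal of some fixed finite index $m$ and defined by a single formula with finitely many parameters, $H$ pulls back, via \L o\'s's theorem, to normal subgroups $H_n\trianglelefteq G_n$ with $[G_n:H_n]=m$ for $\cU$-almost every $n$, with $\prod_{n\to\cU}Y_n=Y$ for suitable unions $Y_n$ of cosets of $H_n$. From $\mu(A\smd Y)=0$ and $|H_n|=|G_n|/m$ we get $\lim_{n\to\cU}|A_n\smd Y_n|/|H_n|=0$, so $|A_n\smd Y_n|\leq\epsilon|H_n|$ for $\cU$-almost every $n$. For any coset $C$ of $H_n$, either $C\seq Y_n$, whence $|C\setminus A_n|\leq|Y_n\setminus A_n|\leq\epsilon|H_n|$, or $C\cap Y_n=\emptyset$, whence $|C\cap A_n|\leq|A_n\setminus Y_n|\leq\epsilon|H_n|$. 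Thus for some (indeed $\cU$-almost every, hence arbitrarily large) $n$ the group $G_n$ does admit a normal subgroup of index at most $n$ with the required dichotomy, contradicting the choice of $(G_n,A_n)$. This contradiction proves the theorem; the argument is ineffective and yields no explicit bound on $n(k,\epsilon)$.
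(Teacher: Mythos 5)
Your proposal follows essentially the same route as the paper: an ultraproduct of putative counterexamples, the local ($\varphi$-)connected component supplied by local stable group theory, the unique invariant Keisler measure yielding a zero--one law on its cosets, normalization by intersecting finitely many conjugates, and transfer back via {\L}o\'{s}'s theorem; the paper packages the middle step as Theorem \ref{main-lemma} and first proves the structure statement (Theorem \ref{thm:structure}), then deduces Theorem \ref{thm:main} exactly as in your final paragraph. One small correction: your assertion that $[G:H]$ is bounded by a function of $k$ alone is false (see Remark \ref{rem:indexbound}), but your argument never actually uses it --- only finiteness of the index in the single ultraproduct group is needed.
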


Theorem \ref{thm:main} is connected to the stable regularity theorem for graphs.  When $G$ is abelian (and written additively), the relation $\delta(x,y)$ defined by $x+y\in A$ induces a graph on $G$. Then the statement of Theorem 1.2, restricted to such $(G,A)$ with $A$ $k$-stable, yields the stable graph regularity lemma of \cite{MS}  (and \cite{MP}),  with the further property that the pieces of the regular partition are cosets of a given subgroup (but without the quantitative aspects of \cite{MS}). For general groups, it is more natural to consider bipartite graphs and, in this case, the normality of the subgroup $H$ in Theorem \ref{thm:main} allows us to deduce a similar regularity lemma. Details are given in Corollary \ref{cor:reg}.  Hence we view Theorem 1.2 as an algebraic regularity statement for stable subsets of groups, in a rather general environment (compared to the setting of \cite{Green} and \cite{TW}). 

Theorem \ref{thm:main} can also be viewed as a structure theorem for subsets $A$ of arbitrary finite groups $G$ which are {\em stable} in the sense that the relation $xy\in A$ is $k$-stable in $G$ for some fixed $k$. In \cite{TW}, the regularity statement of Theorem \ref{thm:TW} is used to obtain a structural result on $k$-stable subsets of $\F_p^n$ in terms of cosets of the subgroup $H$. In particular, with $G$, $H$, and $A$ as in Theorem \ref{thm:TW}, there is a subset $Y\seq G$, which is a union of cosets of $H$, such that $|A\smd Y|\leq\epsilon|G|$ (see \cite[Corollary 1]{TW}).  In fact, we will prove Theorem \ref{thm:main} by proving this structure result for arbitrary finite groups first, and moreover with $\epsilon|G|$ strengthened to $\epsilon|H|$ (but, again, without quantitative bounds on the index of $H$). 

\begin{theorem}\label{thm:structure}
For any $k\geq 1$ and $\epsilon>0$, there is $n=n(k,\epsilon)$  such that the following holds. Suppose $G$ is a finite group  and $A\seq G$ is $k$-stable. Then there is a normal subgroup $H\leq G$, of index at most $n$, and a subset $Y\seq G$, which is a union of cosets of $H$, such that $|A\smd Y|\leq\epsilon|H|$. 
\end{theorem}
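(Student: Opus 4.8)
The natural strategy is the standard model-theoretic "take an ultraproduct and transfer back" argument. Suppose for contradiction the theorem fails for some fixed $k$ and $\epsilon$: then for every $n$ there is a finite group $G_n$ and a $k$-stable set $A_n\seq G_n$ admitting no good pair $(H,Y)$ with $[G_n:H]\leq n$. Form the ultraproduct $G=\prod_{n\to\cU}G_n$ over a nonprincipal ultrafilter, with $A=\prod_{n\to\cU}A_n$ an internal subset. Then $G$ is a pseudofinite group, $A\seq G$ is $k$-stable (stability is a first-order property of the formula $\phi(x,y):=xy\in A$, preserved under ultraproducts), and $G$ carries a finitely-additive probability measure — the pseudofinite counting measure $\mu$ — which is the key analytic tool. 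Since $\phi(x,y)$ is stable, it is in particular NIP, so we can work inside $\operatorname{Th}(G,A,\cdot)$ where types over models are definable and the measure $\mu$ is "generically stable"/definable.

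**Key steps.** First, I would use stability of $\phi$ to produce the relevant subgroup. The right object is $G^{00}_A$, the smallest type-definable subgroup of bounded index that is invariant over a small model — equivalently (by stability) the connected component relative to the $\phi$-formulas; by stability of $\phi$ this is actually the intersection of a family of definable finite-index subgroups, so $G/G^{00}_A$ is profinite, and crucially, by stable group theory (Newelski, Hrushovski–Pillay, or the "stabilizer" machinery), $G^{00}_A$ has bounded index and the quotient map $G\to G/G^{00}_A$ is the relevant "logic topology" structure. Here one should invoke the stable-group fact that $\phi$-stability plus the existence of the measure forces $[G:G^{00}_A]$ to be \emph{finite} — this is what ultimately produces the uniform bound $n$. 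Concretely, using stability one shows that $A$ is (up to a $\mu$-null set) a union of cosets of $G^{00}_A$: the generic type of each coset decides $\phi$, i.e. for $\mu$-almost every $g$, the coset $gG^{00}_A$ is either contained in $A$ or disjoint from $A$ modulo $\mu$-null sets. Defining $Y$ to be the union of those cosets that are "mostly in $A$" gives $\mu(A\smd Y)=0$ in the ultraproduct. Second, I would feed this back to the finite groups. The subgroup $G^{00}_A$ is an intersection of definable finite-index subgroups $\bigcap_i H_i$; since $[G:G^{00}_A]$ is finite, finitely many $H_i$ suffice, so $G^{00}_A=H$ is itself definable of finite index $m$, cut out by a single formula $\psi(x)$ with parameters, and $Y$ is definable by a formula $\theta(x)$. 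Normality of $H$ in $G$ holds because $G^{00}_A$ is invariant; and $|A\smd Y|\leq\epsilon|H|$ becomes the statement $\mu(A\smd Y)\leq \epsilon/m$, which holds in the ultraproduct (it's $0$), hence holds in $\cU$-almost all $G_n$ by Łoś. But in those $G_n$ the formulas $\psi,\theta$ define a normal finite-index subgroup $H_n$ and a union-of-cosets set $Y_n$ witnessing exactly the conclusion of the theorem with $[G_n:H_n]=m\leq$ some fixed bound — contradicting the choice of $G_n$. To extract the uniform $n=n(k,\epsilon)$ one runs this as a compactness/König's-lemma argument: the set of "bad" finite groups would otherwise be a $\cU$-large set on which the conclusion fails for all $n$, impossible.

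**Main obstacle.** The delicate point is establishing that $G^{00}_A$ has \emph{finite} index and is therefore definable — i.e., upgrading the profinite quotient $G/G^{00}_A$ to a finite one — and simultaneously showing $A$ agrees with a union of its cosets up to a $\mu$-null set with the \emph{sharp} $\epsilon|H|$ error rather than $\epsilon|G|$. The finiteness of the index is where the interaction between stability and the pseudofinite measure really bites: one needs that a $\mu$-positive-measure, type-definable subgroup must have finite index (by an argument in the spirit of Pillay–Hrushovski's work on measures and stabilizers, or via the fact that in the stable case the connected component is the intersection of definable subgroups and boundedly many cosets carry positive measure). The $\epsilon|H|$ refinement requires a local/relative version of the counting argument inside a single coset of $H$ — essentially re-running the "generic type decides $\phi$" argument with the measure conditioned on $H$ (which is legitimate since $\mu(H)=1/m>0$) — together with the usual trick of absorbing a bounded number of exceptional cosets. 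Once these two facts are in hand, the translation to the finite statement and the compactness extraction of $n(k,\epsilon)$ are routine.
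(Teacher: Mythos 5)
Your overall strategy (ultraproduct of a putative sequence of counterexamples, pseudofinite counting measure, a definable finite-index connected component, transfer back by \L o\'{s}) is the same as the paper's, but two steps as written would fail. First, you propose to ``work inside $\Th(G,A,\cdot)$ where types over models are definable'' on the grounds that the stable formula $xy\in A$ is in particular NIP. This conflates a local property of one formula with a global property of the theory: the full theory of the pseudofinite group $G$ is in general neither stable nor NIP, so global stable or NIP group theory (definability of types over the whole theory, the global $G^{00}$, generically stable measures) is not available. Everything must be developed locally, for Boolean combinations of instances of $\delta(x,y):=A(y\cdot x)$ only; this is exactly what Section 2 of the paper does, producing a $\delta$-definable finite-index subgroup $G^0_\delta$ and a unique left-invariant $\delta$-Keisler measure. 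Note also that the finiteness of the index comes from the finiteness of the set of generic $\delta$-types (Hrushovski--Pillay), not from a positive-measure argument as you suggest.

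Second, and more concretely, your normality claim --- ``normality of $H$ holds because $G^{00}_A$ is invariant'' --- is wrong in the local setting. The local connected component $K=G^0_\delta$ need not be normal: a conjugate $aKa\inv$ is generally not $\delta$-definable, so the minimality of $K$ among $\delta$-definable finite-index subgroups says nothing about its conjugates (see Remark \ref{rem:local} of the paper); ``invariance'' of $\delta$ means left-translation invariance, not conjugation invariance. The paper repairs this by passing to $H=\bigcap_{g\in G}gKg\inv$, a finite intersection of conjugates, which is normal of index at most $m!$ (where $m=[G:K]$) and definable in the full language $\{\cdot,A\}$, though no longer by a $\delta$-formula --- which is why the final contradiction is run against unions of cosets of $K$ (each such being a union of cosets of $H$) rather than of $H$ itself. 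Your alternative appeal to the global $G^{00}$, which would be normal, collapses into the first problem: without global stability there is no reason for it to have bounded (let alone finite) index or to be controlled by the $\delta$-fragment. With these two repairs your outline does match the paper's proof; in particular the $\epsilon|H|$ (rather than $\epsilon|G|$) error indeed falls out of $\nu(A\smd Y)=0$ in the ultraproduct exactly as you indicate.
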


Our main theorems will be proved by referring to  results in ``local" stability and stable group theory, and using an ultraproduct construction.  In Theorem \ref{main-lemma} we take the opportunity to give a clear statement of what stable group theoretic results holds in the local (formula-by-formula) setting. While local stability theory is very well developed for arbitrary structures (see, e.g., \cite{H}, \cite{HP}, \cite[Section II.2]{shelah}), a precise account for groups, involving local connected components and local Keisler measures, is not available in the literature.  It will be convenient to refer to \cite{HP} where the  results are stated in a form suitable for adaptation to the current paper, but there are other sources such as \cite{pillay} and \cite{H}. 

Our methods are analogous to the proof  in \cite{MP} of the stable regularity lemma (for graphs) \cite{MS}, which again used local stability theory and ultraproducts, without giving explicit bounds.  However, whereas \cite{MP} gave a new proof of known results, in this paper we prove genuinely new results.  Specifically, the statements of Theorems \ref{thm:main} and \ref{thm:structure} for arbitrary groups (even for abelian groups) are entirely new. Moreover, our work provides concrete quantitative improvements  (namely strengthening $\epsilon|G|$ to $\epsilon|H|$ in Theorem \ref{thm:structure}), for which a finitary argument has not been found.    Ongoing work of Wolf and the third author suggests that generalizing the original proof of Theorem \ref{thm:TW} even to the setting of abelian groups will require non-trivial adaptations.  For instance, in that setting, subgroups are replaced by certain approximate subgroups called Bohr sets. Therefore, the infinitary perspective and model theoretic tools employed in this paper allow us to prove facts which seem difficult to prove fully in the finite setting.  On the other hand, as Theorems \ref{thm:main} and \ref{thm:structure} do not obtain explicit bounds on $n(k,\epsilon)$, finding finitary proofs which yield such bounds remains an interesting question.

In Section 2, we obtain the necessary results in local stable group theory. In Section 3 we give the proofs of Theorems \ref{thm:main} and \ref{thm:structure}.

\section{Local stable group theory}

Stability theory is typically about definability in stable theories, and stable group theory is about stability in the presence of a definable group operation (or transitive action). It is important and useful to know that much of the machinery of stability is available when a given formula $\delta(x,\ybar)$ is stable (see the definition below) and we work inside the Boolean closure of sets defined by instances of $\delta$, even though the ambient first order theory may be unstable. We call this ``local" stability, and we give details of the group version in this section. 

Throughout this section, we work with a  fixed group $G$ (possibly infinite) and, given $a,b\in G$, we write $ab$ for the product of $a$ and $b$ in $G$. For this section, we also fix a relation $\delta(x,\ybar)$ on $G$, where $\ybar$ is some finite tuple of variables. We assume $\delta$ is \emph{(left) invariant}, i.e.  for all $\bar{b}\in G^{|\ybar|}$ and $a\in G$, there is some $\bar{c}\in G^{|\ybar|}$ such that $\delta(ax,\bar{b})$ and $\delta(x,\bar{c})$ are equivalent. 

Let $M_\delta$ be the first-order structure which expands the group structure on $G$ by the relation $\delta$. We assume  that $\delta$ is \emph{stable}, i.e. for some $k\geq 1$, there do not exist $a_1,\ldots,a_k\in G$ and $\bbar_1,\ldots,\bbar_k$ in $G^{|\ybar|}$ such that $\delta(a_i,\bbar_j)$ holds if and only if $i\leq j$ (in this case we also say that $\delta$ is \emph{$k$-stable}).

A \emph{$\delta$-formula} is a formula $\phi(x)$ given by  a finite Boolean combination of instances of $\delta(x,\bar{b})$ for $\bar{b}\in G^{|\ybar|}$. We treat $x=x$ and $x\neq x$ as degenerate $\delta$-formulas. Let $S_\delta(M_\delta)$ denote the space of \emph{complete $\delta$-types} over $G$, i.e., maximal sets of $\delta$-formulas such that any finite subset is simultaneously satisfied by an element of $G$. Recall that $S_\delta(M_\delta)$ is a totally disconnected compact Hausdorff space under the topology whose basic open sets are $\{p\in S_\delta(M_\delta):\phi(x)\in p\}$ for some $\delta$-formula $\phi(x)$.  We will often conflate definable sets with the formulas defining them, for instance by saying a type $p\in S_{\delta}(M_{\delta})$ \emph{contains} a $\delta$-definable set $X$ if $p$ contains the $\delta$-formula defining $X$.

A set $A\seq G$ is \emph{(left) generic} if finitely many left translates of $A$ cover $G$. A $\delta$-formula $\phi(x)$ is \emph{generic} if it defines a generic subset of $G$. A type $p(x)\in S_\delta(M_\delta)$ is \emph{generic} if every formula in $p(x)$ is generic. Note also that we have a left action of $G$ on $S_\delta(M_\delta)$: given $p\in S_\delta(M_\delta)$ and $g\in G$, $g.p=\{\phi(g\inv x):\phi(x)\in p\}$. In the following, we always work with the left action of $G$ on definable sets and types. In particular, when we say ``translate" and ``coset", we always mean ``left translate" and ``left coset", etc.

Let $\FER_\delta$ denote the collection of $\emptyset$-definable equivalence relations $E(x_1,x_2)$ on $G$ such that $E$ has finitely many classes and each $E$-class is defined by a $\delta$-formula.  An equivalence relation $E\in\FER_{\delta}$ is \emph{$G$-invariant} if, for any $a,b,g\in G$, $E(a,b)$ holds if and only if $E(ga,gb)$ holds. 

The following fact is taken from \cite{HP}, which is written in the setting of homogeneous spaces $(G,S)$. Our setting is the special case when $G=S$. 

\begin{fact}\label{fact:HP}
Let $X$ be the set of generic types $p\in S_\delta(M_\delta)$.
\begin{enumerate}[$(i)$]
\item $X$ is finite and nonempty.
\item $G$ acts transitively on $X$.
\item There is $E_\delta\in \FER_\delta$ such that $E_\delta$ is $G$-invariant and, for $p_1,p_2\in X$, $p_1=p_2$ if and only if $p_1$ and $p_2$ contain the same $E_\delta$-class.
\item A $\delta$-formula $\phi(x)$ is generic if and only if it is contained in some $p\in X$.
\end{enumerate}
\end{fact}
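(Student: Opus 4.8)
The plan is to carry out, in the purely local (``formula-by-formula'') setting, the standard development of generic types in stable groups; since $\delta$ is $k$-stable, the $\delta$-fragment of $\Th(M_\delta)$ behaves like a stable theory, so all of the usual tools should be available. First I would pass to a sufficiently saturated elementary extension $\cU\succeq M_\delta$ and work with complete $\delta$-types over $\cU$ (and over small elementary submodels), recording the basic local facts: every consistent $\delta$-formula has finite local $\delta$-rank $R_\delta$ (Shelah's $2$-rank localized at $\delta$), bounded in terms of $k$ and $|\ybar|$; $R_\delta$ is invariant under left translation because $\delta$ is left-invariant; $R_\delta(\phi\vee\psi)=\max(R_\delta\phi,R_\delta\psi)$; and every complete $\delta$-type over a model is definable. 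These are exactly the local stability facts in \cite{HP}, \cite{pillay}; the only nonstandard aspect is that everything must be done for the single formula $\delta$ without assuming $\Th(M_\delta)$ is stable, but the relevant lemmas are genuinely local.

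Next I would isolate the local connected component. Let $G^0_\delta$ be the intersection of all $\delta$-definable finite-index subgroups of $G$; this family is nonempty since $G$ itself (defined by $x=x$) is a degenerate $\delta$-formula. The local version of the Baldwin--Saxl condition, which follows from finiteness of $R_\delta$, shows this intersection is already a finite subintersection, so $G^0_\delta$ is itself a $\delta$-definable subgroup of finite index. Define $E_\delta(x_1,x_2)$ to say that $x_1,x_2$ lie in the same left coset of $G^0_\delta$. Then $E_\delta$ has finitely many classes, each class is a coset of $G^0_\delta$ and hence $\delta$-definable, and $E_\delta$ is $G$-invariant for the left action since the condition $x_1\inv x_2\in G^0_\delta$ is unchanged on replacing $(x_1,x_2)$ by $(gx_1,gx_2)$; thus $E_\delta\in\FER_\delta$ is $G$-invariant.

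With $G^0_\delta$ in hand I would establish: (a) every left coset of $G^0_\delta$ is a generic $\delta$-definable set; (b) since $G$ is a finite union of cosets of $G^0_\delta$ each of full $\delta$-rank $r:=R_\delta(x=x)$, a $\delta$-formula is generic if and only if it has $\delta$-rank $r$, and consequently the generic $\delta$-formulas satisfy the disjunction property ($\phi\vee\psi$ generic implies $\phi$ or $\psi$ generic); (c) generic $\delta$-types therefore exist, and a standard rank argument (a $\delta$-connected group has a unique generic type) shows each $p\in X$ contains exactly one coset of $G^0_\delta$ while each coset of $G^0_\delta$ contains exactly one generic type. This yields a $G$-equivariant bijection $X\cong G/G^0_\delta$, which immediately gives $(i)$ ($X$ is finite and nonempty), $(ii)$ ($G$ acts transitively on $X$, being transitive on cosets of $G^0_\delta$), and $(iii)$ (with $E_\delta$ as above, two generic types are equal iff they contain the same $E_\delta$-class). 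For $(iv)$: if $\phi\in p$ for some $p\in X$ then $\phi$ is generic by the very definition of a generic type; conversely, if $\phi$ is generic, the disjunction property shows the set of all generic $\delta$-formulas together with $\phi$ extends to a complete $\delta$-type all of whose formulas are generic, i.e.\ to some $p\in X$ with $\phi\in p$.

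The main obstacle is the purely local character of the whole argument: one must reprove the stable-group facts — finiteness of $R_\delta$, the local Baldwin--Saxl condition, $\delta$-definability and finite index of $G^0_\delta$, uniqueness of the generic type of a $\delta$-connected group, and the disjunction property for generic $\delta$-formulas — working only inside the Boolean algebra of $\delta$-formulas, since $M_\delta$ itself need not be stable. This is delicate but routine bookkeeping; alternatively one simply invokes the homogeneous-space version of these results proved in \cite{HP}, specialized to $G=S$.
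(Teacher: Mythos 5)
Your proposal is correct in outline, but it takes a different route from the paper: the paper's entire proof of this Fact is a citation (parts $(i)$--$(iii)$ are precisely Lemma 5.16 of \cite{HP}, and $(iv)$ is a claim inside the proof of that lemma), whereas you sketch the underlying local stable group theory from scratch and offer the citation only as a fallback. Your route --- finite local rank $R_\delta$, a chain condition yielding a smallest $\delta$-definable finite-index subgroup $G^0_\delta$, and then the $G$-equivariant identification of $X$ with $G/G^0_\delta$ --- is the standard Poizat-style development and, if carried out, proves rather more than the Fact itself: it essentially establishes parts $(i)$, $(ii)$ and $(v)$ of Theorem \ref{main-lemma} along the way, which the paper instead derives \emph{from} the Fact. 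Two steps in your sketch are genuinely more delicate than ``routine bookkeeping,'' and they are exactly the content of the cited lemma. First, Baldwin--Saxl applies to uniformly definable families of subgroups, while your family of \emph{all} $\delta$-definable finite-index subgroups is not uniform; the needed chain condition instead comes from finiteness of the $\delta$-multiplicity of $x=x$, which bounds the index of any such subgroup by the number of types of full $R_\delta$-rank. Second, the implication ``$\phi$ has full $R_\delta$-rank implies $\phi$ is generic'' does not follow merely from $G$ being a finite union of full-rank cosets of $G^0_\delta$; it requires the forking/definability-of-types machinery (cf.\ Remark \ref{rem:fgen}) or the symmetry argument showing that the unique full-rank type on each coset is finitely satisfiable in a suitable sense, so that finitely many translates of any of its formulas cover the coset. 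Since you explicitly permit yourself to invoke the homogeneous-space results of \cite{HP} for precisely these points, the proposal is acceptable; as a self-contained argument, those two steps would still need to be filled in.
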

\begin{proof}
Parts $(i)$, $(ii)$, and $(iii)$ are precisely Lemma 5.16 of \cite{HP}. Part $(iv)$ is given as a claim in the proof of this lemma. 
\end{proof}

By Remark 5.17$(i)$ of \cite{HP}, a $\delta$-formula $\phi(x)$ is generic if and only if, for all $g\in G$, the formula $\phi(gx)$ does not fork over $\emptyset$ (equivalently, does not divide over $\emptyset$), i.e., in modern jargon, $\phi(x)$ is \emph{$f$-generic}. Indeed, the following is evident from the proof of  Remark 5.17 in \cite{HP}.

\begin{remark}\label{rem:fgen}
A $\delta$-formula $\phi(x)$ is generic if and only if for every indiscernible sequence $(g_i)_{i\in\omega}$ in a sufficiently saturated extension $M^*_\delta\succ M_\delta$, $\{\phi(g_ix):i\in\omega\}$ is consistent. 
\end{remark}

Let $\Def_{\delta}(G)$ be the collection of $\delta$-formulas, which we identify with the Boolean algebra of subsets of $G$ defined by $\delta$-formulas. A \emph{$\delta$-Keisler measure} is a  finitely additive probability measure on $\Def_\delta(G)$. Let $G^0_\delta$ denote the $E_\delta$-class of the identity in $G$ (where $E_\delta$ is as in Fact \ref{fact:HP}).

The following theorem summarizes several main features of stable group theory in this local setting. Under the global assumption of stability, this is well-known and standard in any text on stability theory (e.g., \cite{pillay}). However, a clear account of the following result is not available in the literature, and so we take the opportunity to show precisely what works in the local setting, as well as some subtle differences (see Remark \ref{rem:local}).

\begin{theorem}\label{main-lemma}
Fix a group $G$, and let $\delta(x,\bar{y})$ be a stable invariant relation on $G$. 
\begin{enumerate}[$(i)$]
\item $G^0_\delta$ is a subgroup of $G$ of finite index, and is in $\Def_\delta(G)$. The $E_\delta$-classes in $G$ are precisely the left cosets of $G^0_\delta$.
\item Each left coset of $G^0_\delta$ is contained in a unique generic type $p\in S_\delta(M_\delta)$.
\item For any $\phi(x)\in \Def_\delta(G)$ and any left coset $C$ of $G^{0}_{\delta}$ in $G$,  exactly one of $C\cap\phi(G)$ or $C\setminus\phi(G)$ is generic.  

\item For any $\phi(x)\in\Def_\delta(G)$, if $Y$ is the union of the cosets of $G^0_\delta$ whose intersection with $\phi(G)$ is generic, then $\phi(G)\smd Y$ is not generic.

\item $G^0_\delta$ is the smallest finite-index subgroup of $G$ in $\Def_\delta(G)$.
\item There is a unique left-invariant $\delta$-Keisler measure $\mu_\delta$ on $\Def_\delta(G)$. Moreover, given $\phi(x)\in\Def_\delta(G)$, $\mu_\delta(\phi(x))>0$ if and only if $\phi(x)$ is generic.
\end{enumerate}
\end{theorem}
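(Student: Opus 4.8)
\medskip
\noindent\emph{Proof plan.} The aim is to extract $(i)$--$(vi)$ from Fact \ref{fact:HP}, using throughout the standing observation that left-invariance of $\delta$ makes every left translate of a $\delta$-definable set again $\delta$-definable (so cosets of $\delta$-definable subgroups lie in $\Def_\delta(G)$, and $G$ acts on the finite set $X$ of generic types). For $(i)$: $G$-invariance of $E_\delta$ gives $E_\delta(a,b)\Leftrightarrow a\inv b\in G^0_\delta$, so the $E_\delta$-classes are exactly the left cosets of $G^0_\delta$; translating the relation $E_\delta(e,\cdot)$ and using symmetry and transitivity of $E_\delta$ shows $G^0_\delta$ is closed under product and inverse, and $E_\delta\in\FER_\delta$ gives finite index and $\delta$-definability. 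For $(ii)$: every complete $\delta$-type contains exactly one $E_\delta$-class (the classes form a finite $\delta$-definable partition of $G$), so sending $p$ to the unique $E_\delta$-class it contains defines a $G$-equivariant map $X\to G/G^0_\delta$, injective by Fact \ref{fact:HP}$(iii)$; its image is a nonempty $G$-invariant family of cosets, hence all of $G/G^0_\delta$ by transitivity (Fact \ref{fact:HP}$(ii)$), so the map is a bijection.

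For $(iii)$: fixing a coset $C$ and the generic type $p\ni C$ provided by $(ii)$, completeness of $p$ puts exactly one of $C\cap\phi(G)$ and $C\setminus\phi(G)$ into $p$, and that one is generic by Fact \ref{fact:HP}$(iv)$; both cannot be generic, since then each would lie in a generic type containing $C$, forcing the two disjoint sets into the single type $p$. For $(iv)$: a finite union of non-generic $\delta$-definable sets is non-generic (were the union in some $p\in X$, completeness of $p$ would put a member into $p$, making it generic); and $\phi(G)\smd Y$ is the union, over the finitely many cosets $C$ of $G^0_\delta$, of $C\cap\phi(G)$ when this is non-generic and of $C\setminus\phi(G)$ when $C\cap\phi(G)$ is generic (so $C\setminus\phi(G)$ is non-generic by $(iii)$), hence is non-generic.

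For $(v)$: let $H\le G$ be a finite-index subgroup in $\Def_\delta(G)$, and let $p_0\in X$ be the generic type whose $E_\delta$-class is $G^0_\delta$, so that $G^0_\delta\in p_0$. The cosets of $H$ form a finite $\delta$-definable partition of $G$, so $p_0$ contains exactly one of them; intersecting it with $G^0_\delta$ lets us pick a representative $a\in G^0_\delta$ with $aH\in p_0$. For each $g\in G^0_\delta$ the $E_\delta$-class of $g.p_0$ is $gG^0_\delta=G^0_\delta$, hence $g.p_0=p_0$ by Fact \ref{fact:HP}$(iii)$; applying $g.$ to $aH\in p_0$ gives $gaH\in p_0$, and since $p_0$ contains only one coset of $H$ we get $gaH=aH$, i.e.\ $g\in aHa\inv$. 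So $G^0_\delta\subseteq aHa\inv$; as $a\in G^0_\delta$ this forces $a\in aHa\inv$, i.e.\ $a\in H$, whence $aHa\inv=H$ and $G^0_\delta\subseteq H$. Since $G^0_\delta$ is itself such a subgroup by $(i)$, it is the smallest one.

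For $(vi)$: writing $X=\{p_1,\dots,p_m\}$, set $\mu_\delta(\phi)=\tfrac1m|\{i:\phi\in p_i\}|$. This is a finitely additive probability measure on $\Def_\delta(G)$ (the $p_i$ are complete $\delta$-types, all containing $x=x$), it is left-invariant because $G$ permutes $X$, and $\mu_\delta(\phi)>0$ iff $\phi$ lies in some $p_i$ iff $\phi$ is generic by Fact \ref{fact:HP}$(iv)$. For uniqueness, let $\nu$ be any left-invariant $\delta$-Keisler measure: transitivity of $G$ on the $m$ cosets of $G^0_\delta$, with finite additivity, forces $\nu(C)=1/m$ for each coset $C$, so granting the claim $(\star)$ that $\nu$ vanishes on every non-generic $\phi\in\Def_\delta(G)$, part $(iii)$ gives $\nu(C\cap\phi(G))=1/m$ when $C\cap\phi(G)$ is generic and $0$ otherwise, whence $\nu(\phi(G))=\sum_C\nu(C\cap\phi(G))=\mu_\delta(\phi)$ using $(ii)$. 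To prove $(\star)$: by Remark \ref{rem:fgen} (taking the witnessing sequence indiscernible over $M_\delta$), non-genericity of $\phi$ yields an indiscernible $(g_i)_{i<\omega}$ in $M^*_\delta$ with $\{\phi(g_ix):i<\omega\}$ inconsistent, hence by compactness an $n\ge 1$ such that any $n$ of the $\phi(g_ix)$ are jointly inconsistent; for each $N$ the sentence ``there exist $y_1,\dots,y_N$ such that any $n$ of the sets $y_j\inv\phi(G)$ have empty intersection'' then holds in $M^*_\delta$ and so, having parameters in $M_\delta$, in $G$ as well. Choosing such $g'_1,\dots,g'_N\in G$, each point of $G$ lies in at most $n-1$ of the $\delta$-definable sets $(g'_j)\inv\phi(G)$; summing this pointwise bound and using left-invariance gives $N\,\nu(\phi(G))=\sum_j\nu\bigl((g'_j)\inv\phi(G)\bigr)\le n-1$, and letting $N\to\infty$ yields $\nu(\phi(G))=0$. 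I expect $(\star)$ -- equivalently, the uniqueness half of $(vi)$ -- to be the main obstacle: because one cannot assume $\phi(G)\phi(G)\inv$ lies in $\Def_\delta(G)$, the usual stabilizer/stationarity route is unavailable and one must argue directly through indiscernible sequences and a covering estimate as above, which is precisely the kind of ``subtle difference'' between the local and global settings flagged before Remark \ref{rem:local}; part $(v)$ is the other point requiring genuine (if routine) group-theoretic input.
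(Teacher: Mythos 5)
Your proposal is correct, and for parts $(i)$--$(iv)$ it runs essentially parallel to the paper (the paper compresses your finite-union observation in $(iv)$ into the remark that the non-generic $\delta$-definable sets form an ideal). The genuine divergences are in $(v)$ and in the uniqueness half of $(vi)$. For $(v)$, the paper argues by contradiction through part $(iv)$: a proper $\delta$-definable finite-index subgroup $H\leq G^0_\delta$ would make $\phi(G)\smd Y=G^0_\delta\setminus H$ a nonempty union of cosets of $H$, hence generic, contradicting $(iv)$. You instead act by $G^0_\delta$ on the generic type $p_0$ containing $G^0_\delta$ and deduce $G^0_\delta\subseteq aHa\inv$ and then $a\in H$; this is longer but is a correct, direct ``stabilizer''-style argument that handles an arbitrary finite-index $H\in\Def_\delta(G)$ without first replacing it by $H\cap G^0_\delta$. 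For $(vi)$, your existence construction (averaging the $0$--$1$ measures attached to the finitely many generic types) yields the same measure as the paper's $\phi\mapsto k/n$, just indexed by types rather than cosets. The substantive difference is uniqueness: the paper proves ``$\mu(\phi)>0$ implies $\phi$ generic'' by lifting $\mu$ to a saturated elementary extension and citing \cite[Lemma 2.2]{NP}, whereas you prove the contrapositive $(\star)$ by extracting $n$-inconsistency from Remark \ref{rem:fgen}, pulling the existence of $N$ translates, each point lying in at most $n-1$ of them, back to $G$ by elementarity, and running a counting argument against finite additivity and left-invariance. This buys you a self-contained argument that never lifts the measure off $M_\delta$; its only unstated ingredient is that the witnessing sequence in Remark \ref{rem:fgen} may be taken indiscernible over the parameters of $\phi$ (needed to upgrade ``some finite subset is inconsistent'' to ``every $n$-subset is inconsistent''), which you flag explicitly and which is the same standard forking/dividing technology the paper's citation of \cite{HP} and \cite{NP} relies on. Both routes are sound; yours is more elementary and explicit, the paper's is shorter by outsourcing the measure-theoretic step.
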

\begin{proof}
Part $(i)$. The fact that $G^0_\delta$ is a subgroup of $G$, and that its cosets are precisely the $E_\delta$-classes of $G$, follows from $G$-invariance of $E_\delta$. Since $E_\delta$ has only finitely  many classes, $G^0_\delta$ has finite index. 

Part $(ii)$. Since $G^0_\delta$ has finite index, it is generic. Thus every coset of $G^0_\delta$ is contained in some generic type by Fact \ref{fact:HP}$(ii)$, which is unique by Fact \ref{fact:HP}$(iii)$. 

Part $(iii)$.  Let $p\in S_{\delta}(M_{\delta})$ be the unique generic type containing the coset $C$ (by part $(ii)$).  By parts $(iii)$ and $(iv)$ of Fact \ref{fact:HP}, a $\delta$-definable subset of $C$ is generic if and only if the $\delta$-formula defining it is in $p$. As exactly one of $\phi(x)$ or $\neg\phi(x)$ is in $p$, we see that exactly one of $C\cap\phi(G)$ or $C\setminus\phi(G)$ is generic.

Part $(iv)$. Since the nongeneric definable sets form an ideal (by parts $(i)$ and $(iv)$ of Fact \ref{fact:HP}), this is simply a reformulation of part $(iii)$. 

Part $(v)$. If this fails, then there is a $\delta$-definable proper subgroup $H\leq G^0_\delta$, which has finite index in $G$. If $\phi(x)\in\Def_\delta(G)$ defines $H$, then the set $Y$ from part $(iv)$ is just $G^0_{\delta}$, and $\phi(G)\smd Y=G^0_\delta\backslash H$. Now $\phi(G)\smd Y$ is nonempty union of  left cosets of $H$, and so is generic (as $H$ as finite index), which is a contradiction.

Part $(vi)$. We first claim that if $\mu$ is a left-invariant $\delta$-Keisler measure on $\Def_\delta(G)$, and $\phi(x)$ is a $\delta$-formula, then $\mu(\phi(x))>0$ if and only if $\phi(x)$ is generic. To see this, first note that the right-to-left direction is immediate from invariance and finite additivity. So suppose $\mu(\phi(x))>0$. Since genericity of $\phi(x)$ is preserved when in elementary extensions and substructures, and $\mu$ can be lifted to an elementary extension (e.g., following \cite[Section 2]{HPP}), we may assume without loss of generality that $M_\delta$ is sufficiently saturated. Now $\phi(x)$ is generic by Remark \ref{rem:fgen} and a standard exercise on probability measures (see, e.g., \cite[Lemma 2.2]{NP}). 

Let $n$ be the index of $G^0_\delta$ in $G$.  Suppose $\mu$ is a left-invariant $\delta$-Keisler measure on $\Def_\delta(G)$. Note that any coset of $G^0_\delta$ has $\mu$-measure $\frac{1}{n}$. Fix $\phi(x)\in\Def_\delta(G)$, and let $Y=C_1\cup\ldots\cup C_k$ be as in part $(iv)$. Since $\phi(G)\smd Y$ is not generic, we have $\mu(\phi(x))=\mu(Y)=\frac{k}{n}$. This shows $\mu$ is uniquely defined, and one checks that $\mu_\delta\colon \phi(x)\mapsto \frac{k}{n}$ is a left-invariant $\delta$-Keisler measure  on $\Def_\delta(G)$.  In particular, finite additivity follows from part $(iii)$ and the fact that the non-generic sets form an ideal (by parts $(i)$ and $(iv)$ of Fact \ref{fact:HP}).
\end{proof}

\begin{remark}\label{rem:local}
The group $G^0_\delta$ is obviously a local analog of the \emph{connected component} $G^0$ of a  stable group $G$ (i.e., $G^0$ is the intersection of all definable subgroups of finite index). However, unlike the global case, there is no reason that $G^0_{\delta}$ should be a \emph{normal} subgroup of $G$, as a conjugate $aG^0_\delta a\inv$ is not necessarily $\delta$-definable. Along these same lines, the unique left-invariant Keisler measure on $\Def_\delta(G)$ is not defined on right translates of $\delta$-definable sets, which is again in contrast to the global setting where the unique left-invariant Keisler measure on a stable group is also the unique right-invariant Keisler measure. 
\end{remark}

\begin{remark}\label{rem:indexbound}
In Theorem \ref{main-lemma}, it is natural to ask about how the index of $G^0_\delta$ depends on $k$, where $k\geq 1$ is such that $\delta(x;\ybar)$ is $k$-stable. We observe that there is no direct relationship in general. For example, let $G=(\Z,+)$ and let $\delta(x,y)$ be $x+y\in n\Z$, for some fixed $n\geq 1$. Then $\delta(x,y)$ is $2$-stable since $n\Z$ is a subgroup. But $G^0_\delta\seq n\Z$ and so $[G:G^0_\delta]\geq n$ (in fact $G^0_\delta=n\Z$). Conversely, given $k\geq 1$ let $\delta(x,y)$ be $x+y\in\{0,\ldots,k-1\}$. Then $\delta(x,y)$ is $(k+1)$-stable, but not $k$-stable. Moreover, any $\delta$-formula defines a finite or cofinite subset of $\Z$, and so $G^0_\delta$ must be $\Z$.
\end{remark}

\section{Proof of the main results}

We can now prove Theorems \ref{thm:main} and \ref{thm:structure} from Section 1. 

\begin{proof}[Proof of Theorem \ref{thm:structure}] 
For a contradiction, suppose that the theorem is false.  Then there is $k>0$ and $\epsilon >0$ such that the following hold. For each $i\geq 0$, there is a finite group $G_i$ and a $k$-stable subset $A_i\subseteq G_i$ such that for any normal $H\leq G_i$ of index at most $i$, and any $Y\seq G_i$, which is a union of cosets of $H$, 
\[
|A_i\smd Y|>\epsilon|H|.
\]
Note, in particular, that $|G_i|>i$, since otherwise we could take $H$ to be the trivial group and contradict the assumptions on $G_i$.

The strategy of the proof is to work with an ultraproduct of the structures $(G_i,A_i)$, considered in the group language with an extra unary predicate, and obtain an infinite group contradicting Theorem \ref{main-lemma}. In order to apply arguments involving Keisler measures, we will need to work in a larger language containing extra symbols for measuring formulas. There are several accounts of this kind of formalism in the literature, and we will loosely follow \cite[Section 2.6]{HruAG}. 

Let $\cL$ be the language with three sorts, $S_{Gr}$, $S_P$, and $S_R$, a unary predicate $A(x)$ on $S_{Gr}$, binary functions $ \cdot\colon S_{Gr}^2\to S_{Gr}$ and $+\colon S_R^2\rightarrow S_R$, binary relations $\in$ on $S_{Gr}\times S_P$ and $<$ on $S_R\times S_R$, and a function symbol $\mu\colon S_P\rightarrow S_R$. Given an $\cL$-structure $M$ and a sort $S$ in $\cL$, we let $S^M$ denote the interpretation of $S$ in $M$. For each $i$, let $M_i$ be the $\cL$-structure with $S^{M_i}_{Gr}=(G_i,\cdot, A_i)$, $S^{M_i}_R=([0,1],+,<)$, and $S^{M_i}_P=\cP(G_i)$, and with $\in$  interpreted as the set membership relation and $\mu$ interpreted as the normalized counting measure $\mu_i\colon \cP(G_i)\rightarrow [0,1]$. Now let $\cU$ be a non-principal ultrafilter on $\N$ and consider the $\cL$-structure $M=\prod_{\cU} M_i$.  Let $G$ denote the underlying group of $M$ and let $\delta(x,y)=A(y\cdot x)$. Note that $\delta(x,y)$ is invariant and $k$-stable. 

For each $i$, let $\Def^*_\delta(G_i)$ denote the collection of all formulas in the language $\{\cdot,A\}$, with parameters from $G_i$, in a single variable $x$ (so $\Def_\delta(G_i)\seq\Def^*_\delta(G_i))$. Similarly define $\Def^*_\delta(G)$. 

Next, we will observe that, for any $\phi(x)\in \Def^*_\delta(G)$, the set $\phi(G)$ defined by $\phi(x)$ is identified with a unique element of $S^{M}_P$ via the interpretation of $\in$ in $M$.  Essentially this is because the same property is both true in each $M_i$ and uniformly expressible in $\cL$, and thus transfers via {\L}o{\'{s}}'s Theorem. Precisely, for each $G_i$ and $\phi(x,\abar)\in \Def^*_\delta(G_i)$, there is a unique element $X_{\phi(x,\abar)}$ of $S_P^{M_i}=\cP(G_i)$ such that 
$$
M_i\models \forall x (x\in X_{\phi(x,\abar)}\leftrightarrow \phi(x,\abar)).
$$
Now let $Z=X_{\phi(x,\zbar)}$ be the formula $\forall x (x\in Z\leftrightarrow \phi(x,\zbar))$, in the free variables $Z$ (of sort $S_P$) and $\zbar$ (of sort $S_{Gr}$), and note this uniformly defines $X_{\phi(x,\abar)}$ in $G_i$ for each $i$.  So for each $\phi(x,\abar)\in \Def^*_\delta(G)$, there is a unique element $X_{\phi(x,\abar)}$ in $S_P^{M}$ such that 
$$
M\models \forall x (x\in X_{\phi(x,\abar)}\leftrightarrow \phi(x,\abar)),
$$
and moreover, if $\abar=[(\abar_i)_{i\in\N}]$, then we must have that $X_{\phi(x,\abar)}=\prod_{\cU} X_{\phi(x,\abar_i)}$.

Let $\st\colon S^M_R\to[0,1]$ be the standard part map. For each $\phi(x,\abar)\in \Def^*_\delta(G)$, set $\nu(\phi(x,\abar))=\st\mu^M(X_{\phi(x,\abar)})$. It is routine to show that, for any $\phi(x,\abar)\in\Def^*_\delta(G)$, if $(\abar_i)_{i\in\N}$ is a representative for $\abar$, then
\[
\nu(\phi(x,\abar))=\lim_{\cU}\mu_i(\phi(x,\abar_i)).
\]
Thus $\nu$ is an ultralimit of counting measures, which are left-invariant finitely additive probability measures. By  \L o\'{s}'s Theorem, $\nu$ is a left-invariant $\delta$-Keisler measure on $\Def_\delta(G)$. Because $\delta$ is $k$-stable, we see that $\nu$ is the unique left-invariant probability measure on $\Def_{\delta}(G)$, given by Theorem \ref{main-lemma}. Moreover, we have the finite index subgroup $K:=G^0_\delta$ of $G$, given by Theorem \ref{main-lemma}.

Let $m$ be the index of $K$, and let $H=\bigcap_{g\in G}gKg\inv$. Then $H$ is normal of index $n\leq m!$. Moreover, $H\in\Def_\delta^*(G)$ since it is a finite intersection of conjugates of $K\in\Def_\delta(G)$. Let $H=\phi(G,\abar)$ where $\phi(x,\abar)\in \Def^*_\delta(G)$ and $\abar$ is a tuple from $M$. Let $\abar=[(\abar_i)_{i\in \mathbb{N}}]$, where each $\abar_i$ is a tuple from $M_i$.  For $i\in\N$, let $H_i=\phi(G,\abar_i)$. Define 
\[
V=\{i\geq n:\text{$H_i$ is a normal subgroup of $G_i$ of index $n$}\},
\]
and note that $V\in \cU$.  By our choice of the $G_i$, it follows that for any $i\in V$ and any $Y\seq G_i$, which is a union of cosets of $H_i$,
\[
\textstyle \mu_i(A_i \smd Y)>\frac{\epsilon}{n}.
\]
Given $I\seq [n]$, let $\ybar_I=(y_j)_{j\in I}$, and define the formula 
\[
\theta_I(x;\ybar_I,\zbar):=A(x)\smd\bigvee_{j\in I}\phi(y_j\cdot x,\zbar).
\]
Now define the formula
\begin{multline*}
\psi(\zbar):= \forall y_1\ldots \forall y_n\left(\left(\forall w \bigvee_{j=1}^n \phi(y_j\cdot w,\zbar)\right)\rightarrow \right.\\
\left. \bigwedge_{I\subseteq [n]}\forall Z \bigg(Z=X_{\theta_I(x;\ybar_I,\zbar)}\rightarrow \mu(Z)>\frac{\epsilon}{n}\bigg)\right).
\end{multline*}
In other words, given $i\geq 0$ and $\cbar\in G^{|\zbar|}$, $M_i\models \psi(\cbar)$ if and only if for any $b_1,\ldots,b_n\in G_i$, if $G_i$ is covered by the translates $b_1\phi(G_i,\cbar),\ldots,b_n\phi(G_i,\cbar)$ then, for any $I\seq [n]$,  $\mu_i(A_i\smd Y)>\frac{\epsilon}{n}$ where $Y$ is the union of $b_j\varphi(G_i,\cbar)$ for $j\in I$. In particular, $M_i\models\psi(\abar_i)$ for all $i\in V$. By \L o\'{s}'s Theorem, $M\models\psi(\abar)$. 

Let $A_*\seq G$ be the interpretation of $A(x)$ in $M$. Since $M\models\psi(\abar)$, it follows that for any $Y\seq G$, which is a union of cosets of $H$, we have $\mu^M(A_*\smd Y)>\frac{\epsilon}{n}$, and so $\nu(A_*\smd Y)\geq \frac{\epsilon}{n}>0$. Since $K$ is a union of cosets of $H$, the same statement is true where $Y$ is a union of cosets of $K$. But since $A_*$ is defined in $M$ by $\delta(x,1)\in\Def_\delta(G)$, this contradicts that $\nu$ is the measure $\mu_\delta$ from Theorem \ref{main-lemma}.
\end{proof}

\begin{proof}[Proof of Theorem \ref{thm:main}]
Given $k\geq 1$ and $\epsilon>0$, let $n=n(k,\epsilon)$  be as in Theorem \ref{thm:structure}. Suppose $G$ is a finite group and $A\seq G$ is $k$-stable. Let $H\leq G$ be the normal subgroup of index at most $n$ given by Theorem \ref{thm:structure}, and let $Y\seq G$ be a union of cosets of $H$ such that $|A\smd Y|\leq\epsilon |H|$. Let $C$ be a coset of $H$ in $G$. If $C\seq Y$ then $(C\setminus A)\seq (Y\setminus A)\seq A\smd Y$. Otherwise, since $Y$ is a union of cosets of $H$, we have $C\cap Y=\emptyset$ and so $C\cap A\seq A\setminus Y\seq A\smd Y$. 
\end{proof}

Before moving to graph regularity, we make several remarks about the previous proofs.

\begin{remark}
Given $G$, $A$, and $H$ as in the statement of Theorem \ref{thm:main} or  \ref{thm:structure}, it follows from the proof of Theorem \ref{thm:structure} that $H$ is definable from $A$ in the following sense: $H$ is a finite intersection of conjugates of a subgroup $K$, which is in the Boolean algebra of subsets of $G$ generated by left translates of $A$.
\end{remark}

\begin{remark}
As mentioned in the introduction, Theorem \ref{thm:structure} improves the structural components of the work in \cite{TW} (valid for $G=\F_p^n$), in particular strengthening $|A\smd Y|\leq\epsilon|G|$ to $|A\smd Y|\leq\epsilon|H|$. The underlying reason for this improvement is that, in the proof of Theorem \ref{thm:main}, we obtain $\nu(A_*\smd Y)=0$ in the ultraproduct. Indeed, one could reformulate the statements of Theorems \ref{thm:main} and \ref{thm:structure} to obtain any uniformly defined error (at the cost of changing the bounds).  For example, given $k\geq 1$ and $\gamma\colon \Z^+\to\R^+$, there is $n=n(k,\gamma)$ such that the following holds. Suppose $G$ is a finite group and $A\seq G$ is $k$-stable. Then there is a normal subgroup $H\leq G$, of index $m\leq n$, and a subset $Y\seq G$, which is a union of cosets of $H$, such that $|A\smd Y|\leq\gamma(m)|H|$.
\end{remark}

\begin{remark}
Continuing the thread of Remark \ref{rem:indexbound}, we observe that in Theorem \ref{thm:structure}, it is not possible to remove the dependency of $n$ on $\epsilon$. Precisely, for any $k\geq 2$ and $n\geq 1$, there is some $\epsilon>0$ such that the following holds. For any $N\geq 1$, there is a finite group $G$, with $|G|\geq N$, and a $k$-stable subset $A\seq G$ such that, for any subgroup $H\leq G$ of index at most $n$ and any set $Y\seq G$, which is a union of cosets of $H$, one has $|A\smd Y|>\epsilon|G|$.

To see this, fix $k\geq 2$ and $n\geq 1$. Pick $\epsilon>0$ small enough so that there is a prime $p>n$ satisfying $\frac{1}{1-\epsilon}<p<\frac{1}{\epsilon}$. Now fix $N\geq 1$, and find a prime $q>n$ such that $pq\geq N$. Let $G=(\Z/ p\Z)\times (\Z/ q\Z)$ and let $A=\{0\}\times (\Z/ q\Z)$. Note that $A$ is $2$-stable since it is a subgroup. Suppose $H\leq G$ is a subgroup of index at most $n$. Since $p$ and $q$ are primes greater than $n$, it follows that $H$ has index $1$, and so $H=G$. Now suppose $Y$ is a union of cosets of $H$. Then either $Y=\emptyset$ or $Y=G$, and so we have $|A\smd Y|=q$ or $|A\smd Y|=pq-q$, respectively. Since $\frac{1}{1-\epsilon}<p<\frac{1}{\epsilon}$, this yields $|A\smd Y|>\epsilon pq=\epsilon|G|$ in either case.
\end{remark} 

Finally, we deduce a graph regularity statement from Theorem \ref{thm:main}. Since we work with possibly nonabelian groups, it is more natural to consider bipartite graphs. Given a finite bipartite graph $\Gamma=(V,W;E)$, subsets $X\seq V$ and $Y\seq W$, and vertices $v\in V$ and $w\in W$, define 
\[
\deg_\Gamma(v,Y)=|\{y\in Y:E(v,y)\}|\makebox[.5in]{and}\deg_\Gamma(X,w)=|\{x\in X:E(x,w)\}|.
\]
Given $\epsilon>0$ and nonempty $X\seq V$ and $Y\seq W$, with $|X|=|Y|$, we say that the pair $(X,Y)$ is \emph{uniformly $\epsilon$-good for $\Gamma$} if either:
\begin{enumerate}[$(i)$]
\item for any $x\in X$ and $y\in Y$, $\deg_\Gamma(x,Y)=\deg_\Gamma(X,y)\leq\epsilon |X|$, or
\item for any $x\in X$ and $y\in Y$, $\deg_\Gamma(x,Y)=\deg_\Gamma(X,y)\geq (1-\epsilon)|X|$.
\end{enumerate}
This choice of terminology is motivated by \cite{MS}, since it is related to the notion of an \emph{$\epsilon$-good} subset in a graph. Next, given $X\seq V$ and $Y\seq W$, let
\[
d_\Gamma(X,Y)=\frac{|E\cap (X\times Y)|}{|X\times Y|}.
\]
Recall that the pair $(X,Y)$ is \emph{$\epsilon$-regular} if, for all $X_0\seq X$ and $Y_0\seq Y$, with $|X_0|\geq \epsilon|X|$ and $|Y_0|\geq\epsilon|Y|$, we have $|d_\Gamma(X,Y)-d_\Gamma(X_0,Y_0)|<\epsilon$. One can show that uniformly $\epsilon^2$-good pairs are $\epsilon$-regular with edge density at most $\epsilon$ or at least $1-\epsilon$ (as in the regularity lemma for stable graphs in \cite{MS}). In fact, regularity occurs between pairs $(X_0,Y_0)$ of nonempty subsets in which only one of $X_0$ or $Y_0$ has size at least $\epsilon|X|$.

\begin{proposition}
Suppose $\Gamma=(V,W;E)$ is a finite bipartite graph and $(X,Y)$ is uniformly $\epsilon^2$-good for $\Gamma$, for some $\epsilon\in (0,\frac{1}{2})$. Then either:
\begin{enumerate}[$(i)$]
\item for all nonempty $X_0\seq X$ and $Y_0\seq Y$, if either $|X_0|\geq \epsilon|X|$ or $|Y_0|\geq \epsilon|Y|$ then $d_\Gamma(X_0,Y_0)\leq \epsilon$, or 
\item for all nonempty $X_0\seq X$ and $Y_0\seq Y$, if either $|X_0|\geq\epsilon|X|$ or $|Y_0|\geq\epsilon|Y|$ then $d_\Gamma(X_0,Y_0)\geq(1-\epsilon)$.
\end{enumerate}
\end{proposition}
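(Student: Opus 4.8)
The plan is to argue by the dichotomy built into the definition of "uniformly $\epsilon^2$-good." Since $(X,Y)$ is uniformly $\epsilon^2$-good for $\Gamma$, one of the two clauses $(i)$ or $(ii)$ in that definition holds; I will show that clause $(i)$ there yields conclusion $(i)$ here, and clause $(ii)$ there yields conclusion $(ii)$ here. By the obvious symmetry (replacing $E$ by its bipartite complement swaps the two cases and replaces $d_\Gamma(X_0,Y_0)$ by $1-d_\Gamma(X_0,Y_0)$), it suffices to treat the first case.

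So assume $\deg_\Gamma(x,Y)=\deg_\Gamma(X,y)\leq \epsilon^2|X|$ for all $x\in X$, $y\in Y$, and fix nonempty $X_0\seq X$, $Y_0\seq Y$ with (say) $|X_0|\geq \epsilon|X|$; the case $|Y_0|\geq\epsilon|Y|$ is handled the same way using $\deg_\Gamma(X,y)$ in place of $\deg_\Gamma(x,Y)$ and recalling $|X|=|Y|$. I would count the edges of $\Gamma$ inside $X_0\times Y_0$ in two ways. Summing degrees into $Y$ over $x\in X_0$,
\[
|E\cap(X_0\times Y_0)|\;\leq\;\sum_{x\in X_0}\deg_\Gamma(x,Y)\;\leq\;|X_0|\cdot\epsilon^2|X|.
\]
Dividing by $|X_0\times Y_0|=|X_0|\,|Y_0|$ gives $d_\Gamma(X_0,Y_0)\leq \epsilon^2|X|/|Y_0|$. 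Now use $|Y_0|\geq 1$... but that is too weak, so instead I first note that the hypothesis $|X_0|\ge\epsilon|X|$ is the one I should exploit differently: count edges into $X_0$ from each $y\in Y_0$. For $y\in Y_0$ we have $\deg_\Gamma(X,y)\leq\epsilon^2|X|$, hence $\deg_\Gamma(X_0,y)\le\epsilon^2|X|\le\epsilon|X_0|$ (using $|X_0|\ge\epsilon|X|$). Summing over $y\in Y_0$,
\[
|E\cap(X_0\times Y_0)|=\sum_{y\in Y_0}\deg_\Gamma(X_0,y)\;\le\;|Y_0|\cdot\epsilon|X_0|,
\]
so $d_\Gamma(X_0,Y_0)\le\epsilon$, as required. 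The symmetric subcase $|Y_0|\ge\epsilon|Y|=\epsilon|X|$ is identical with the roles of $X$ and $Y$ swapped, using $\deg_\Gamma(x,Y)\le\epsilon^2|X|\le\epsilon|Y_0|$ for $x\in X_0$ and summing over $x\in X_0$. This completes case $(i)$, and as noted the bipartite-complement symmetry finishes case $(ii)$.

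The main point to get right — and the only real subtlety — is that the "$\epsilon^2$" in the hypothesis is what lets a degree bound of $\epsilon^2|X|$ be absorbed into a relative bound of $\epsilon$ once one of $X_0,Y_0$ has size at least $\epsilon|X|$; one must be careful to use the large side of the pair to normalize, and to invoke $|X|=|Y|$ when the large side is $Y_0$. No further ingredients are needed: the argument is a direct double-counting and does not even use $\epsilon<\tfrac12$ for this conclusion (that bound is relevant only to the discussion preceding the proposition, ensuring the two alternatives are mutually exclusive).
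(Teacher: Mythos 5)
Your proof is correct and is essentially the paper's argument: both reduce to the same double count, bounding each relevant degree by $\epsilon^2|X|\leq\epsilon\cdot(\text{size of the large side})$ and dividing by $|X_0||Y_0|$. The only cosmetic differences are that you sum over the side opposite the large one (rather than fixing WLOG that $Y_0$ is large) and dispatch the dense case by bipartite complementation instead of repeating the computation.
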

\begin{proof}
Assume $(X,Y)$ is uniformly $\epsilon^2$-good for $\Gamma$. We show (i) or (ii) holds (note that $\epsilon<\frac{1}{2}$ prevents both from holding simultaneously). Fix nonempty $X_0\seq X$ and $Y_0\seq Y$ such that $|X_0|\geq \epsilon|X|$ or $|Y_0|\geq \epsilon|Y|$. Without loss of generality, assume $|Y_0|\geq \epsilon|Y|= \epsilon|X|$ (the other case is symmetric). Then
\begin{equation*}
d_\Gamma(X_0,Y_0)=\frac{1}{|X_0\times Y_0|}\sum_{a\in X_0}\deg_\Gamma(x,Y_0).\tag{$\dagger$}
\end{equation*}
If $\deg_\Gamma(a,Y)\leq\epsilon^2|X|$ for all $a\in X$ then $(\dagger)$ implies
\[
d_\Gamma(X_0,Y_0)\leq\frac{\epsilon^2 |X|}{|Y_0|}\leq \epsilon.
\]
On the other hand, if $\deg_\Gamma(a,Y)\geq (1-\epsilon^2)|X|$ for all $a\in X$ then, for all $a\in X$, $\deg_\Gamma(a,Y_0)\geq |Y_0|-\epsilon^2|X|$, and so $(\dagger)$ implies
\[
d_\Gamma(X_0,Y_0)\geq\frac{|Y_0|-\epsilon^2|X|}{|Y_0|}\geq 1-\epsilon.
\]
\end{proof}

The next result gives the graph theoretic regularity statement implied by Theorem \ref{thm:main}. Given a group $G$ and $A\seq G$, we define the \emph{Cayley graph} $C_A(G)$ to be the bipartite graph $(V,W;E)$ where $V=W=G$ and, given $a,b\in G$, $E(a,b)$ holds if and only if $ab\in A$. Let $\deg_A$ denote $\deg_{C_A(G)}$. Note that, given $X\seq G$ and $a\in G$, we have $\deg_A(a,X)=|A\cap aX|$ and $\deg_A(X,a)=|A\cap Xa|$.

\begin{corollary}\label{cor:reg}
For any $k\geq 1$ and $\epsilon>0$, there is $n=n(k,\epsilon)$  such that the following holds. Suppose $G$ is a finite group and $A\seq G$ is $k$-stable. Then there is a partition $C_1,\ldots,C_m$ of $G$ satisfying the following properties.
\begin{enumerate}[$(i)$]
\item $C_1,\ldots,C_m$ are the cosets of a normal subgroup of index $m\leq n$ (so $|C_i|=|C_j|$ for all $i,j\leq m$).
\item For any $i,j\leq m$, $(C_i,C_j)$ is uniformly $\epsilon$-good for $C_A(G)$. 
\end{enumerate}
\end{corollary}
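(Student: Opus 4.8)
\emph{Proof proposal.} The plan is to apply Theorem \ref{thm:main} verbatim: given $k$ and $\epsilon$, take $n=n(k,\epsilon)$ from that theorem, and for a finite group $G$ with $k$-stable $A\seq G$ let $H\leq G$ be the normal subgroup of index $m\leq n$ it provides, with $C_1,\ldots,C_m$ its cosets. This partitions $G$ and gives property $(i)$ immediately, since cosets of $H$ are nonempty and all have size $|H|$. It remains to verify property $(ii)$.

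The key observation is that, because $H$ is normal, the relevant degrees in $C_A(G)$ between two cosets are constant and depend only on a single auxiliary coset of $H$. Fix $i,j\leq m$ and write $C_i=a_iH$, $C_j=a_jH$. For $x\in C_i$, say $x=a_ih$ with $h\in H$, we have $xC_j=a_iha_jH=a_ia_j(a_j\inv h a_j)H=a_ia_jH$, using $a_j\inv h a_j\in H$; hence $\deg_A(x,C_j)=|A\cap xC_j|=|A\cap a_ia_jH|$, independent of $x\in C_i$. Symmetrically, for $y\in C_j$, writing $y=a_jh'$ and using $Ha_j=a_jH$, one gets $C_iy=a_iHa_jh'=a_ia_jH$, so $\deg_A(C_i,y)=|A\cap a_ia_jH|$ as well. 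Thus, setting $D:=a_ia_jH$ (a single coset of $H$), we have $\deg_A(x,C_j)=\deg_A(C_i,y)=|A\cap D|$ for all $x\in C_i$ and $y\in C_j$.

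Now invoke the dichotomy of Theorem \ref{thm:main} for the coset $D$. If $|A\cap D|\leq\epsilon|H|$, then all the degrees above are $\leq\epsilon|H|=\epsilon|C_i|$, which is exactly clause $(i)$ of the definition of uniformly $\epsilon$-good. If instead $|D\setminus A|\leq\epsilon|H|$, then $|A\cap D|\geq(1-\epsilon)|H|=(1-\epsilon)|C_i|$, giving clause $(ii)$. In either case $(C_i,C_j)$ is uniformly $\epsilon$-good for $C_A(G)$, and since $i,j$ were arbitrary this completes the argument.

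I do not expect a real obstacle here; the only point requiring care is bookkeeping with left versus right translates, and recognizing that this is precisely where normality of $H$ is used (the weaker finite-index subgroup $G^0_\delta$ of Theorem \ref{main-lemma} would not suffice, which is why the proof of Theorem \ref{thm:structure} passes to $H=\bigcap_{g\in G}gKg\inv$). Everything else is the elementary coset computation above together with the reformulation of the $|C\cap A|$ versus $|C\setminus A|$ alternative as the two clauses of ``uniformly $\epsilon$-good''.
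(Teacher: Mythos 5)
Your proposal is correct and follows essentially the same route as the paper: apply Theorem \ref{thm:main} to get the normal subgroup $H$, use normality to show that all degrees between $C_i=a_iH$ and $C_j=a_jH$ equal $|A\cap a_ia_jH|$, and then read off the two clauses of uniform $\epsilon$-goodness from the dichotomy applied to the coset $a_ia_jH$. The coset bookkeeping is exactly the computation the paper performs.
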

\begin{proof}
Let $n=n(\epsilon,k)$  be given by Theorem \ref{thm:main}. Suppose $G$ is a finite group and $A\seq G$ is $k$-stable. By Theorem \ref{thm:main}, there is a normal subgroup $H$, of index $m\leq n$, such that, for each coset $C$ of $H$, either $|C\cap A|\leq\epsilon |H|$ or $|C\setminus A|\leq\epsilon|H|$. Let $C_1,\ldots,C_m$ be the cosets of $H$.

Fix $i,j\leq m$, and let $C_i=xH$ and $C_j=yH$. Then, using normality of $H$, it follows that for any $a\in C_i$ and $b\in C_j$,
\[
\deg_A(a,C_j)=|A\cap aC_j|=|A\cap xyH|\makebox[.5in]{and} \deg_A(C_i,b)=|A\cap C_jb|=|A\cap Hxy|.
\]
So, by choice of $H$, it follows that $(C_i,C_j)$ is uniformly $\epsilon$-good. 
\end{proof}

\bibliographystyle{amsplain}
\end{document}